\newlength{\defbaselineskip}
\theoremstyle{plain}
\newtheorem{theorem}{Theorem}[section]
\newtheorem{lemma}[theorem]{Lemma}
\theoremstyle{definition}
\newtheorem{definition}[theorem]{Definition}
\newtheorem{remark}[theorem]{Remark}
\theoremstyle{remark}
\newcommand{\re}{\mathbb{R}}
\def\rn{\mathbb{R}^{N}}
\def\be{\begin{equation}}
\def\ee{\end{equation}}
\def\rife#1{(\ref{#1})}
\def\t1p0{T^{1,p}_{0}(\Omega)}
\def\m2{M^{\frac{N(p-1)}{N-1}}(\Omega)}
\def\rn{\mathbb{R}^{N}}
\def\into{\int_{\Omega}}
\def\w-1p'{W^{-1,p'}(\Omega)}
\def\pw-1p'u{L^{p'}(0,1;W^{-1,p'}(\Omega))}
\def\dys{\displaystyle}
\def\lp'n{(L^{p'}(\Omega))^{N}}
\def\bk{\color{black}}
\long\def\salta#1{\relax}
\begin{document}


\author{K. H. Karlsen}
\address{(Kenneth H. Karlsen) \\
Centre of Mathematics for Applications,
University of Oslo, P.O. Box 1053, Blindern
NO-0316,  Oslo, Norway}
\email{kennethk@math.uio.no}

\author{F. Petitta}
\address{(Francesco Petitta) \\
Dpto. de An\'alisis Matem\'atico,
Universitat de Val\'encia,
C/ Dr. Moliner 50, 46100,
Burjassot, Valencia, Spain}
\email{francesco.petitta@uv.es}

\author{S. Ulusoy}
\address{(Suleyman Ulusoy) \\
Center for Scientific Computation and Mathematical Modeling
(CSCAMM), University of Maryland, College Park, MD, 20742, USA}
\email{sulusoy@cscamm.umd.edu}

\keywords{Fractional Laplacian, measure data, existence, uniqueness,
duality solutions} \subjclass[2000]{35B40, 35K55}

\begin{abstract}
We describe a duality method to prove both existence and  uniqueness \bk of solutions
to nonlocal problems like
$$
(-\Delta)^s v = \mu \quad \text{in}\ \mathbb{R}^N,
$$
with vanishing conditions at infinity. 
 Here $\mu$ is a bounded
Radon measure whose support is compactly
contained in $\mathbb{R}^N$, $N\geq2$, \bk
and $-(\Delta)^s$  is the fractional Laplace operator
of order $s\in (1/2,1)$.
\end{abstract}

\title[The fractional Laplacian with measure data]{A duality approach
to the fractional Laplacian with measure data}

\date{\today}

\maketitle


\section{Introduction}

The study of elliptic equations with measure data (or with $L^1-$data)
is motivated by some engineering problems. See for
instance \cite{8, 9, 10, 13} for applications in
electromagnetic induction heating, modeling of wells in porous
media flow, and  and the $k-\varepsilon$ model of turbulence.

For $N>2,$ \bk where $N$  denotes the dimension, the main mathematical
difficulty in such kind of problems is that, it is not always possible to
employ the classical variational methods for these problems.

Integro-partial differential equations, also referred to as
fractional or L\'evy partial differential equations, appear
frequently in many different areas of research and find many
applications in engineering and finance, including nonlinear
acoustics, statistical mechanics, biology, fluid flow, pricing of
financial instruments, and portfolio optimization. Though  different
 approaches to these kind of problems are possible (e.g., Harmonic Analysis tools), 
in this note we propose a purely PDE method which is easy and flexible
enough  to be utilized in a variety of different contexts
involving linear nonlocal operators and irregular 
data.

Motivated by the applications mentioned above, for the 
sake of simplicity, here we focus on  
the study of the fractional Laplace problem
\begin{equation}\label{main}
	\begin{array}{l}
		(-\Delta)^s u = \mu \quad \text{in $\mathbb{R}^N$},  \\\\
		u(x)\to 0 \quad \text{as $|x|\to +\infty$}.
	\end{array}
\end{equation}
where $\mu$ is a bounded, compactly supported Radon measure whose
support is compactly contained in $\mathbb{R}^N$, $N\geq 2$.
 
In (\ref{main}) $(-\Delta)^s$ is the fractional Laplacian of order
$s\in (\frac12,1)$ defined, up to constants, as
\begin{equation}\label{mainf}
    (-\Delta)^s v =
    P.V. \int_{\mathbb{R}^N} \frac{v(x)-v(y)}{|x-y|^{N+2s}}\ dy.
\end{equation}

Notice that, due to the singularity of the kernel, the right hand side of the previous
expression is not well defined in general. Due to this fact we have to  restrict ourselves
to functions $v$ belonging to some fractional Sobolev space 
$W^{\eta,p}_{loc} (\rn)$ (see Definition \rife{fsp} below). In this context, as we will see,
it is possible to give a precise meaning to the previous expression.

Due to the nonlocal character of the operator, we set the problem in the
whole $\rn$, so that some of the arguments turn out to have 
their own, not surprising, interpretation, also in this limit  case $s=1$.

Our main result is the following:

\begin{theorem}\label{maint}
Let $\mu$ be a Radon measure with compact support in $\rn$. Then
there exists a  unique \bk duality  solution $u$ (see definition
\ref{duale} below) for problem \rife{main}.  Moreover, $u\in W^{1-
\frac{2-2s}{q},q}_{loc}(\rn)$, for any $q<\frac{N+2-2s}{N+1-2s}$.
\end{theorem}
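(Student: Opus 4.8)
The natural framework is Stampacchia's duality method, adapted to the self-adjoint nonlocal operator $(-\Delta)^s$. The plan is to define $u$ through its pairing against solutions of the \emph{dual} problem---which, by self-adjointness of $(-\Delta)^s$, is governed by the same operator---and then to recover the announced regularity by regularizing $\mu$ and passing to the limit. The whole argument hinges on a sharp boundedness estimate for the dual solutions, which is what allows an arbitrary bounded measure to be tested against them.

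First I would settle the well-posedness and regularity of the dual problem: for a datum $g$ ranging over a suitable class (say $g\in L^\infty(\rn)$ with compact support, or $g\in C^\infty_0(\rn)$), solve
$$
(-\Delta)^s w_g = g \ \text{ in } \rn, \qquad w_g(x)\to 0 \ \text{ as } |x|\to\infty ,
$$
and prove that $w_g$ is bounded, indeed continuous, with an estimate $\|w_g\|_{L^\infty(\rn)}\le C\|g\|_{L^{q'}(\rn)}$, where $q'$ is the exponent conjugate to the target integrability $q<\frac{N+2-2s}{N+1-2s}$. Since $2s>1$, the solution operator gains more than one derivative, so the required $L^\infty$ bound follows from the mapping properties of the Riesz potential together with the fractional Sobolev embeddings of Definition \ref{fsp}. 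This bound is precisely what makes the pairing $\int_{\rn} w_g \, d\mu$ meaningful for every bounded, compactly supported $\mu$.

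With the dual estimate in hand, existence is immediate: the map $g\mapsto \int_{\rn} w_g\,d\mu$ is a bounded linear functional on $L^{q'}(\rn)$, bounded by $C\,\|\mu\|_{\mathcal{M}(\rn)}\,\|g\|_{L^{q'}(\rn)}$, so by Riesz representation there is a unique $u\in L^q(\rn)$ satisfying $\int_{\rn} u\, g \, dx = \int_{\rn} w_g \, d\mu$ for all admissible $g$; by construction this $u$ is the duality solution of Definition \ref{duale}. Uniqueness is then built into the definition: if $u_1,u_2$ both solve \rife{main} in the duality sense, subtracting the defining identities gives $\int_{\rn}(u_1-u_2)\,g\,dx=0$ for every $g$ in the test class, and density forces $u_1=u_2$.

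It remains to upgrade this to $u\in W^{1-\frac{2-2s}{q},q}_{loc}(\rn)$ for $q<\frac{N+2-2s}{N+1-2s}$. Here I would take $\mu_n=\mu*\rho_n\to\mu$ with $\rho_n$ a standard mollifier, solve $(-\Delta)^s u_n=\mu_n$ with $u_n=\Phi*\mu_n$ where $\Phi(x)=c_{N,s}|x|^{2s-N}$ is the fundamental solution, and derive a uniform bound in $W^{1-\frac{2-2s}{q},q}(K)$ on every compact $K$. This bound is read off from the explicit kernel: a fractional derivative of $\Phi$ has homogeneity $2s-N-\sigma$, and the associated potential of a bounded measure lies in the Marcinkiewicz (weak-Lebesgue) space whose borderline exponent is exactly $\frac{N+2-2s}{N+1-2s}$; interpolating strictly below this threshold yields the stated $q$, while balancing homogeneity against integrability (the inequality $\sigma<2s-N/q'$) produces the fractional order $1-\frac{2-2s}{q}$. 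Passing to the limit and identifying the limit with the duality solution through the dual estimate closes the argument. I expect the main obstacle to be this dual regularity step: obtaining the sharp, \emph{uniform} $L^\infty$ bound $\|w_g\|_{L^\infty}\le C\|g\|_{L^{q'}}$ and the matching Marcinkiewicz estimate for the Riesz potential of a measure and its fractional derivatives, all done globally on $\rn$, so that the behaviour near the compact support of the data and the slow decay $|x|^{2s-N}$ at infinity must be controlled simultaneously.
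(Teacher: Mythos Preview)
Your proposal is correct and follows the same architecture as the paper: an $L^\infty$ bound for the dual Riesz potential $w_g$ (the paper's Lemma~\ref{w}), Riesz representation for existence and uniqueness in a Lebesgue space (Theorem~\ref{te}), then mollification of $\mu$ with uniform fractional Sobolev bounds on $u_n=\Phi\ast\mu_n$ and passage to the limit via compact embedding (Theorem~\ref{fur}). The one genuine technical divergence is in how you obtain the uniform $W^{\eta,q}$ bound: you propose Marcinkiewicz estimates on the homogeneous kernel $|x|^{2s-N-\eta}$ followed by interpolation below the weak-$L^p$ threshold, whereas the paper localises with a cut-off $\psi$ and applies Young's convolution inequality directly, observing that $\psi(x)|x|^{-(N-2s+\eta)}\in L^q(\rn)$ precisely when $(N-2s+\eta)q<N$, i.e.\ $q<\frac{N+2-2s}{N+1-2s}$. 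Both routes yield the same exponent; the paper's is slightly more elementary (no interpolation needed), while yours delivers the endpoint weak-space estimate for free and is closer in spirit to the classical Boccardo--Gallou\"et treatment of local measure-data problems. One minor sharpening available to you: in the existence step, taking the dual exponent $\sigma>N/(2s)$ rather than your $q'>N+2-2s$ gives the stronger initial Lebesgue regularity $u\in L^r_{loc}$ for all $r<\frac{N}{N-2s}$, though this does not affect the final fractional Sobolev statement.
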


We emphasize that the value of $q$ is always less than $2$, and that
this is consistent with the classical theory of  Dirichlet problems \bk concerning linear elliptic
equations with measure data on bounded domains (see for instance \cite{s}).
Furthermore, the choice to restrict ourselves to the case $1/2<s<1$
is made simply to clarify the statements and the proofs. Notice  that, with our method, 
it is possible to treat the case of values $s$ that are
somewhat smaller than $\frac12$. Indeed, we can allow for $s>\frac14
(2+N - \sqrt{4+N^2})$,  that in particular  implies $1-
\frac{2-2s}{q}>0$.

On the other hand, the case $s=1$ fall into the classical framework 
in the case of Dirichlet boundary value problems on bounded domains (see \cite{s} again).   
Concerning the data, we will only consider bounded measures 
which are compactly supported in $\rn$. This is crucial since 
our methods will rely on the use of auxiliary functions solving 
associated \emph{dual problems} (namely the Riesz Potentials 
for the Fractional Laplace operator), 
so that the key role will be played by local estimates in suitable fractional 
Sobolev spaces gathered together with vanishing 
condition at infinity for these functions.

The  paper is organized as follows: In Section \ref{sec2}
we introduce some preliminary facts and prove a first existence
result.  In Section \ref{sec:first-existence} we establish a result regarding 
the existence of a duality solution in a suitable Lebesgue space. 
We conclude the proof of  our main result, 
Theorem \ref{maint}, in Section \ref{sec3}.

\section{Preliminary facts and dual problem}\label{sec2}

We start this section by recalling some basic facts 
about \emph{fractional Sobolev spaces}. 
For further details see \cite{Dib}, \cite{st} (see also \cite{lt}).

\begin{definition}\label{fsp}
For $0<\eta< 1$ and $1\leq p<\infty$,  let $\Omega$ be an 
open domain of $\rn$. We define the \emph{fractional sobolev space} 
$W^{\eta,p}(\Omega)$ as the set of all functions $u$ in $L^p (\Omega)$ 
such that
 $$
 \into\into \frac{|u(x)-u(y)|^p}{|x-y|^{N+\eta p}}\ dxdy<\infty.
 $$
This space, endowed with the norm
$$
\|u\|_{W^{\eta,p}(\Omega)}=\|u\|_{L^p (\Omega)} 
+ \left( \into\into \frac{|u(x)-u(y)|^p}{|x-y|^{N+\eta p}}\ dxdy\right)^{\frac{1}{p}}
$$
is a Banach space. Moreover, we will say that $u$ is in
$W_{loc}^{\eta,p}(\rn)$ if $u\in W^{\eta,p}(B_R)$, for any ball
$B_R$ in $\rn$.
\end{definition}

Let us observe that, keeping an eye on the definition of fractional derivatives using
the Fourier transform, we  formally have  
$$
|D|^{\eta}u \in L^{p}(\Omega) \iff \iint_{\Omega \times \Omega}
\frac{|u(x)-u(y)|^p}{|x-y|^{N+\eta p}} \ dxdy < \infty.
$$

In our analysis, we shall also use the following Sobolev embedding theorem
for fractional order spaces which we recall here in the form we need
it, see for instance \cite {A, st}.

\begin{theorem}[Fractional Sobolev Embedding Theorem]\label{emb}
Let $B_{R}$ be a ball of radius $R$  in $\rn$.
Then, there exists a constant $C$ depending only on $\eta$ and $N$, such that
\begin{equation}\label{gamma}
	\|v\|_{L^{\overline{\gamma}}(B_R)}\leq C\|v\|_{W^{\eta, p}(B_{R})}, 
	\qquad  v\in C^{\infty}_{0}(B_{R}),
\end{equation}
 where $\overline{\gamma}=\frac{Np}{N-\eta p}$, with $p>1$ and $0<\eta<1$.
Moreover, $W^{\eta, p}(B_{R})$ is compactly embedded in
$L^\gamma (B_R)$ for any $1\leq \gamma<\overline{\gamma}$.
\end{theorem}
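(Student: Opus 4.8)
The plan is to reduce the embedding over $B_R$ to the scale-invariant Sobolev inequality on all of $\rn$, to prove the latter by a dyadic truncation argument, and finally to obtain the compact embedding from a fractional Rellich theorem combined with interpolation. First I would isolate the core inequality: for every $w\in C^\infty_0(\rn)$,
\begin{equation}\label{global}
\|w\|_{L^{\overline{\gamma}}(\rn)}\le C(N,\eta,p)\left(\iint_{\rn\times\rn}\frac{|w(x)-w(y)|^p}{|x-y|^{N+\eta p}}\,dx\,dy\right)^{\frac1p},\qquad \overline{\gamma}=\frac{Np}{N-\eta p}.
\end{equation}
Granting \eqref{global}, the statement \eqref{gamma} follows by extension and restriction: given $v\in C^\infty_0(B_R)$ one produces an extension $w\in W^{\eta,p}(\rn)$ with $\|w\|_{W^{\eta,p}(\rn)}\le C\|v\|_{W^{\eta,p}(B_R)}$ (for compactly supported $v$ the zero extension already works, the tail $\int_{B_R}\int_{\rn\setminus B_R}|v(x)|^p|x-y|^{-N-\eta p}\,dy\,dx$ being controlled by $\|v\|^p_{L^p(B_R)}$), applies \eqref{global} to $w$, and uses $\|v\|_{L^{\overline{\gamma}}(B_R)}\le\|w\|_{L^{\overline{\gamma}}(\rn)}$.

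The heart of the matter is \eqref{global}, which I would establish by a dyadic decomposition of the range, valid for all $p\ge 1$. Setting $A_k=\{\,|w|>2^k\,\}$ for $k\in\ze$, the layer-cake formula gives $\|w\|^{\overline{\gamma}}_{L^{\overline{\gamma}}(\rn)}\asymp\sum_{k\in\ze}2^{k\overline{\gamma}}\,|A_{k}|$, so it suffices to bound $\sum_k 2^{k\overline{\gamma}}|A_{k+1}|$ by a power of the seminorm. To this end I introduce the Lipschitz truncations
\begin{equation}
w_k=\big(\min\{|w|,2^{k+1}\}-2^k\big)^{+},
\end{equation}
which vanish on $\rn\setminus A_k$ and equal $2^k$ on $A_{k+1}$. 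Two estimates then drive the argument. The first is a superadditivity bound,
\begin{equation}
\sum_{k\in\ze}\iint_{\rn\times\rn}\frac{|w_k(x)-w_k(y)|^p}{|x-y|^{N+\eta p}}\,dx\,dy\le C\iint_{\rn\times\rn}\frac{|w(x)-w(y)|^p}{|x-y|^{N+\eta p}}\,dx\,dy,
\end{equation}
expressing that the oscillation of $w$ distributes among the dyadic layers. The second is a geometric lower bound for each layer: since $w_k=2^k$ on $A_{k+1}$ and $w_k=0$ on $\rn\setminus A_k$,
\begin{equation}
\iint_{\rn\times\rn}\frac{|w_k(x)-w_k(y)|^p}{|x-y|^{N+\eta p}}\,dx\,dy\ge 2^{kp}\int_{A_{k+1}}\int_{\rn\setminus A_k}\frac{dy\,dx}{|x-y|^{N+\eta p}},
\end{equation}
and the nonlocal interaction energy on the right is bounded below by a power of the measures of the consecutive level sets $A_{k+1}\subset A_k$. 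Combining the two estimates, summing in $k$, and converting back through the layer-cake identity by a discrete H\"older inequality (here the exponent $\overline{\gamma}=Np/(N-\eta p)$ enters) yields \eqref{global}.

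I expect the per-layer geometric estimate, and its summation, to be the main obstacle. Unlike the local case $\eta=1$, where the truncations have disjoint gradient supports and the analogue of the superadditivity bound is pointwise, the nonlocality of the Gagliardo kernel couples all layers, so both the superadditivity and the lower bound must be quantified with constants uniform in $k$; moreover the interaction energy between $A_{k+1}$ and $\rn\setminus A_k$ degenerates when $|A_k|\gg|A_{k+1}|$, so one must treat separately the levels at which consecutive measures are comparable and then recombine the two regimes in the final summation. Once \eqref{global} is in hand, the compact embedding follows in two steps. First I would prove the fractional Rellich theorem, $W^{\eta,p}(B_R)\hookrightarrow\hookrightarrow L^p(B_R)$: the Gagliardo seminorm controls the $L^p$ modulus of continuity of the (zero-extended) functions uniformly over any bounded family, so the Fr\'echet--Kolmogorov--Riesz compactness criterion applies and yields, from any bounded sequence, a subsequence converging in $L^p(B_R)$. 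Finally, for $p\le\gamma<\overline{\gamma}$ one interpolates, $\|u\|_{L^{\gamma}(B_R)}\le\|u\|^{1-\theta}_{L^{p}(B_R)}\|u\|^{\theta}_{L^{\overline{\gamma}}(B_R)}$ with $\frac1\gamma=\frac{1-\theta}{p}+\frac{\theta}{\overline{\gamma}}$ and $\theta\in[0,1)$, so the uniform $L^{\overline{\gamma}}$ bound furnished by the continuous embedding together with $L^p$ convergence forces convergence in $L^{\gamma}$; for $1\le\gamma<p$ the inclusion $L^p(B_R)\hookrightarrow L^{\gamma}(B_R)$ on the bounded ball gives the same conclusion. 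This proves compactness of $W^{\eta,p}(B_R)\hookrightarrow L^{\gamma}(B_R)$ for every $1\le\gamma<\overline{\gamma}$.
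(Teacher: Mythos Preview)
The paper does not prove this theorem at all: it is quoted as a known preliminary fact, with references to Adams and Stein, and then used as a black box in the proof of Theorem~\ref{fur}. So there is no ``paper's own proof'' to compare your attempt against. Your sketch is essentially the dyadic--truncation proof of the fractional Sobolev inequality (the route of Savin--Valdinoci, reproduced in the Di~Nezza--Palatucci--Valdinoci survey) combined with the Fr\'echet--Kolmogorov criterion and interpolation for the compact part, and as such its architecture is sound and standard.

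One technical point deserves attention. Your reduction from $B_R$ to $\rn$ asserts that for $v\in C^\infty_0(B_R)$ the tail
\[
\int_{B_R}\int_{\rn\setminus B_R}\frac{|v(x)|^p}{|x-y|^{N+\eta p}}\,dy\,dx
\]
is controlled by $\|v\|_{L^p(B_R)}^p$. This is not uniform in $v$: for $x\in B_R$ one has $\int_{\rn\setminus B_R}|x-y|^{-N-\eta p}\,dy\sim\operatorname{dist}(x,\partial B_R)^{-\eta p}$, so the implied constant blows up as $\operatorname{supp}(v)$ approaches $\partial B_R$, and the resulting inequality would carry a constant depending on $v$, not only on $N,\eta,p$ as the statement demands. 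The fix is routine---use a bounded extension operator $W^{\eta,p}(B_R)\to W^{\eta,p}(\rn)$ (available since $B_R$ is a smooth domain), or run the level-set argument directly on $B_R$---but the zero-extension shortcut as written does not close.
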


Let us come back to our nonlocal problem. We fix $\frac12< s< 1$. 
We focus on the following elliptic nonlocal problem
\begin{equation}\label{du}
\begin{split}
& (-\Delta)^{s}u =\mu  \ \ \ \text{in }\  \rn   \, \\
& u(x) \to  0, \quad \text{as}\  |x| \to \infty.
\end{split}
\end{equation}

If $\mu$ is a smooth function, then a vast amount of theory has been
developed for this type of problems \cite{Garroni:2002il,st,lan,sil} and 
we refer to those for further details.  Let us just recall that, if $\mu=f$ is 
sufficiently regular, for instance if  $f$ belongs to the Schwartz class $\mathcal{S}$, 
then a representation formula is available for solutions of
\rife{du}, through the convolution with the Poisson kernel, namely
there exists a constant $C_{N,s}$, such that
\begin{equation}\label{rep}
	w(x)= (-\Delta)^{-s}f= C_{N,s}\int_{\rn}\frac{f(y)}{|x-y|^{N-2s}}\ dy,
\end{equation}
or, equivalently 
\begin{equation}\label{doublew}
	(-\Delta)^{s}w = f \ \ \ \text{in }\  \rn\, . 
\end{equation}
The function $w$ is also called the Riesz potential 
of order $s$ associated to the function $f$. 

Moreover, it is easy to check that, if for instance $f$ is 
compactly supported in $\rn$, then $w(x)\to 0$ as $|x|\to \infty$.

For our aims, we need to specify a bit this general fact.
In particular, we need a preliminary result concerning
the regularity of  functions $w$ defined through \rife{rep}
with less smooth $f$'s, since these functions will be
involved as auxiliary functions in our methods. 

In the next lemma we collect some further properties of 
the Riesz potentials $w$ defined in \rife{rep} in the case of   $f \in L^\sigma(\rn)$, with
$\sigma>\frac{N}{2s}$, and compactly supported in $\rn$.

\begin{lemma}\label{w}
Let $w$ be defined as in \rife{rep}, and $f$ in $L^{\sigma}(\rn)$,
with $\sigma>\frac{N}{2s}$. Moreover, assume that there exists
$R>0$ such that $f\equiv 0 $ a.e.~in $B_{R}^{c}$, the complement of
$B_{R}$ in $\rn$. Then $w$ satisfy
\begin{itemize}
\item[i)] $\|w\|_{L^{\infty}( \rn)}\leq C_{N,s,R, \sigma}\|f\|_{L^{\sigma} (\rn)}$;

\item[ii)] $w$ is continuous on $\rn$;
\item[iii)] $\lim\limits_{|x|\to+\infty}w(x)=0$.
\end{itemize}
\end{lemma}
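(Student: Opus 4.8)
The plan is to treat all three statements as consequences of the single elementary fact that $w$ is the convolution $w = K*f$ with kernel $K(x)=C_{N,s}|x|^{-(N-2s)}$, combined with the compact support of $f$ and Hölder's inequality. The crucial arithmetic observation, which I would record once at the outset, is that the hypothesis $\sigma>\frac{N}{2s}$ is \emph{equivalent} to $(N-2s)\sigma'<N$, where $\sigma'=\frac{\sigma}{\sigma-1}$ is the conjugate exponent; indeed $(N-2s)\sigma'<N \iff (N-2s)\sigma<N(\sigma-1)\iff 2s\sigma>N$. This is exactly the condition guaranteeing that $z\mapsto |z|^{-(N-2s)\sigma'}$ is locally integrable near the origin, which is what powers every estimate below. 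I would also note that $N-2s>0$ in the range $N\geq2$, $s<1$, and that $f\in L^1(\rn)$ since it is a compactly supported $L^\sigma$ function with $\sigma>1$.

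For (i), since $f$ vanishes outside $B_R$ I would write $|w(x)|\leq C_{N,s}\int_{B_R}\frac{|f(y)|}{|x-y|^{N-2s}}\,dy$ and apply Hölder with exponents $\sigma,\sigma'$, reducing the claim to a uniform-in-$x$ bound for $I(x):=\int_{B_R}|x-y|^{-(N-2s)\sigma'}\,dy$. I would bound $\sup_x I(x)$ by splitting into two regimes: for $|x|\leq 2R$ one has $B_R\subset B_{3R}(x)$, so a change of variables and the local integrability noted above give $I(x)\leq \int_{B_{3R}(0)}|z|^{-(N-2s)\sigma'}\,dz<\infty$; for $|x|>2R$ one has $|x-y|\geq |x|-R$ for all $y\in B_R$, so $I(x)\leq |B_R|\,(|x|-R)^{-(N-2s)\sigma'}$, which is bounded (and in fact vanishing). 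Taking the $\frac1{\sigma'}$ power yields $\|w\|_{L^\infty(\rn)}\leq C_{N,s,R,\sigma}\|f\|_{L^\sigma(\rn)}$.

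For (ii), I would fix $\delta>0$ and split the kernel as $K=K\mathbf 1_{B_\delta}+K\mathbf 1_{B_\delta^c}=:K_1^\delta+K_2^\delta$, giving $w=K_1^\delta*f+K_2^\delta*f$. The outer piece $K_2^\delta$ is bounded (by $\delta^{-(N-2s)}$), so $K_2^\delta*f$ is uniformly continuous for each fixed $\delta$ by continuity of translations in $L^1$. The inner piece I would control exactly as in (i): Hölder gives $\|K_1^\delta*f\|_{L^\infty}\leq C\|f\|_{L^\sigma}\big(\int_{B_\delta}|z|^{-(N-2s)\sigma'}\,dz\big)^{1/\sigma'}=C\,\delta^{(N-(N-2s)\sigma')/\sigma'}\|f\|_{L^\sigma}$, and the exponent $N-(N-2s)\sigma'$ is \emph{positive} by the arithmetic observation, so this tends to $0$ as $\delta\to0$. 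Thus $K_2^\delta*f\to w$ uniformly on $\rn$, and $w$ is continuous as a uniform limit of continuous functions.

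For (iii), I would use the compact support once more: for $|x|>2R$ and $y\in B_R$ one has $|x-y|\geq |x|-R\geq |x|/2$, whence $|w(x)|\leq C_{N,s}(|x|/2)^{-(N-2s)}\|f\|_{L^1(B_R)}$; since $N-2s>0$ this decays to $0$ as $|x|\to\infty$. The only genuinely delicate point throughout is the continuity in (ii): one must resist estimating $w(x)-w(x_0)$ directly across the singularity and instead isolate it via the $\delta$-splitting, the whole argument hinging on the strict inequality $\sigma>\frac{N}{2s}$ which renders the near-diagonal kernel integrable to the relevant power. Everything else is a routine application of Hölder's inequality and the support condition.
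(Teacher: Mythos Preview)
Your proof is correct. Parts (i) and (iii) follow essentially the same route as the paper (H\"older plus the observation $(N-2s)\sigma'<N$; direct kernel bound for large $|x|$), though you supply more detail for the uniform-in-$x$ bound on $I(x)$ and you extract an explicit decay rate $|x|^{-(N-2s)}$ in (iii) rather than just an $\varepsilon$-argument.

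The one genuine difference is part (ii). The paper argues pointwise: take $x_n\to x$, observe that the family $y\mapsto f(y)|x_n-y|^{-(N-2s)}$ is equiintegrable on $B_R$ (this is where $(N-2s)\sigma'<N$ is implicitly used again), and pass to the limit by Vitali. You instead split the kernel $K=K\mathbf 1_{B_\delta}+K\mathbf 1_{B_\delta^c}$, show the outer piece gives a continuous convolution via boundedness of $K_2^\delta$ and $L^1$-continuity of translations of $f$, and show the inner piece is uniformly small in $L^\infty$ as $\delta\to0$. Your argument has the mild advantage of yielding \emph{uniform} continuity of $w$ on all of $\rn$ (as a uniform limit of uniformly continuous functions), and it makes the role of the strict inequality $\sigma>N/2s$ completely explicit through the positive exponent $N-(N-2s)\sigma'$. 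The paper's equiintegrability argument is shorter to state but leaves that mechanism implicit.
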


\begin{proof}
From the definition of $w$, using H\"older's inequality, we have
\be\label{pic}
|w(x)|\leq \|f\|_{L^{\sigma}(\rn)}\left(\int_{B_R}
\frac{dy}{|x-y|^{(N-2s)\sigma'}}\right)^{\frac{1}{\sigma'}}\leq C\|f\|_{L^{\sigma}(\rn)}
\ee
since $\sigma>\frac{N}{2s}$ implies $(N-2s)\sigma'<N$, and so i) is proved.

To prove ii) just let $x_n\to x$ in $\rn$. Then, it is easy to check that, since
the Poisson Kernel is radially symmetric, the sequence $\frac{f(y)}{|x_n -y|^{N-2s}}$ is
equiintegrable on $B_R$ and so we can pass to the limit in
$n$ proving the continuity of $w$.

Now, observe that for a given $\varepsilon >0,$ we 
can choose $|x|$  large enough such that
$$
\max_{y \in B_R}   |x-y|^{-(N-2s)\sigma'} \leq \varepsilon.
$$
This clearly implies, from \rife{pic}
$$
|w(x)| \leq C \varepsilon,
$$
for $|x| >>R$ that is  $iii)$.
\end{proof}

Now we are in position to come back  to the 
original problem \rife{du}. As we said,  in order to deal
with rough data, we will develop a duality argument as the one
introduced by Stampacchia in \cite{s} in order to treat linear
elliptic operators in divergence form with Dirichlet boundary conditions.
Here is the definition adapted to our case:

\begin{definition}\label{duale}
We say that  a function $ u\in L_{loc}^{1}( \rn)$  is
a \emph{duality solution} for problem \rife{du} if
\begin{equation}\label{dualf}
\int_{\rn} u g \ dx= \int_{\rn} w\ d\mu,
\end{equation}
for any $g\in C^{\infty}_0(\rn)$, where $w$ is defined by
\begin{equation}\label{dual}
w(x)= C_{N,s}\int_{\rn}\frac{g(y)}{|x-y|^{N-2s}}\ dy,
\end{equation}
 \end{definition}

\begin{remark}\label{coin2}  It is important to observe that $w$ 
defined through \rife{dual} is a duality solution of problem
$$
 (-\Delta)^{s}w =g  
 \ \ \ 
 \text{in }\  \rn\,, 
 \ \ \ \
 w(x) \to  0,   \text{as}\  |x| \to \infty\,.
$$
Indeed, if $h\in C^{\infty}_0(\rn)$, then one can check that
$$
\begin{array}{l}
\displaystyle\int_{\rn} w(x) h(x)  \ dx= 
\int_{\rn}\int_{\rn}C_{N,s}\frac{g(y)h(x)}{|x-y|^{N-2s}}\ dy\ dx
\\\\ \displaystyle= \int_{\rn} g(x)\left(C_{N,s}\int_{\rn}\frac{h(y)}{|x-y|^{N-2s}}\ dy\right)\ dx\,,
\end{array}
$$
that is the definition of \emph{duality solution} for $w$.   This fact allows us to better 
specify the meaning of "$u\to0$ as $|x|\to\infty$" for a duality solution with measure data. 
As in the classical case, even if not explicitly stated in the definition, the 
decay at infinity for the solution is, in some sense, hidden in the 
formulation through the presence of the dual functions $w$ and 
it turns out to be attained in the classical sense 
once the data are regular enough (see iii) of Lemma \ref{w}).
\end{remark}

\begin{remark}\label{coin}
Let us also notice that, if everything is smooth, then a duality solution turns
out to fall into the classical framework. For instance, it coincides 
with the distributional solution (see for instance \cite{ka} for 
its definition for the same problem. In fact, if $\mu=f$ is smooth, 
then for every $\varphi\in C^{\infty}_{0}(\rn)$ we can formally compute
$$
\begin{array}{l}
	\dys \int_{\rn}f\varphi\ dx = \int_{\rn}u g 
	=\frac12 \int_{\rn}\int_{\rn} 
	\frac{(u(y)-u(x) )(\varphi(y)-\varphi(x))}{|x-y|^{N+2s}},
\end{array}
$$
where $(-\Delta)^{s}\varphi =g$. 
That is, according with the  definition
$$
(-\Delta)^s u = f,
$$
in the distributional sense.
\end{remark}

\section{A First Existence Result}\label{sec:first-existence}
Let us first  prove the existence of
a duality solution in a suitable Lebesgue space.

\begin{theorem}\label{te}
Let $\mu$ be a compactly supported Radon measure on $\rn.$ Then 
there exists a  unique \bk duality solution $u$ for problem \rife{du}.
Moreover, $u\in L^{r}_{loc}(\rn)$, for any $r<\frac{N}{N-2s}$.
\end{theorem}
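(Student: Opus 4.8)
The plan is to follow the Stampacchia duality scheme, exploiting the regularity of the Riesz potential established in Lemma \rife{w}. The key observation is that the definition of duality solution \rife{dualf}--\rife{dual} already pairs $u$ against arbitrary $g\in C^{\infty}_{0}(\rn)$, so uniqueness will come essentially for free, while existence reduces to a single a priori estimate on the dual function $w$.

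First I would fix a ball $B_R$ and restrict attention to test functions $g\in C^{\infty}_{0}(B_R)$. For each such $g$, formula \rife{dual} produces its Riesz potential $w$, and since $g\in L^{\sigma}(\rn)$ for every $\sigma$ (being smooth and compactly supported), Lemma \rife{w} guarantees that $w$ is bounded, continuous, and vanishes at infinity. In particular the right-hand side $\int_{\rn} w\, d\mu$ is a well-defined finite number, because $\mu$ is a bounded, compactly supported Radon measure and $w$ is continuous and bounded.

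The main step is the a priori estimate. Choosing $\sigma=r'$ with $r'>\frac{N}{2s}$ (equivalently $r<\frac{N}{N-2s}$), part i) of Lemma \rife{w} yields
\[
\left| \int_{\rn} w\, d\mu \right| \leq \|w\|_{L^{\infty}(\rn)}\, |\mu|(\rn) \leq C_{N,s,R,r'}\, \|g\|_{L^{r'}(B_R)}\, |\mu|(\rn).
\]
This says precisely that the linear functional $T_R(g):=\int_{\rn} w\, d\mu$ is continuous on $C^{\infty}_{0}(B_R)$ for the $L^{r'}(B_R)$ norm. Since $C^{\infty}_{0}(B_R)$ is dense in $L^{r'}(B_R)$, the functional $T_R$ extends to a bounded functional on $L^{r'}(B_R)$, and the Riesz representation theorem provides a unique $u_R\in L^{r}(B_R)$, with $\|u_R\|_{L^{r}(B_R)}\leq C|\mu|(\rn)$, such that $\int_{B_R} u_R g\, dx = \int_{\rn} w\, d\mu$ for all $g\in C^{\infty}_{0}(B_R)$. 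I expect this estimate to be the heart of the argument: it is exactly where the gain of integrability $r<\frac{N}{N-2s}$ originates, transferring the regularity of the dual problem into summability of $u$.

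Finally I would glue the local solutions. For $R_1<R_2$ and any $g\in C^{\infty}_{0}(B_{R_1})\subset C^{\infty}_{0}(B_{R_2})$, both $u_{R_1}$ and $u_{R_2}$ satisfy the same identity with the same right-hand side $\int_{\rn} w\, d\mu$, so $u_{R_2}$ restricted to $B_{R_1}$ agrees with $u_{R_1}$ almost everywhere. This consistency lets me define a single $u\in L^{r}_{loc}(\rn)$ by $u|_{B_R}=u_R$, which is then a duality solution for \rife{du}. Uniqueness is immediate: if $u_1,u_2$ are two duality solutions, subtracting \rife{dualf} gives $\int_{\rn}(u_1-u_2)\,g\, dx=0$ for every $g\in C^{\infty}_{0}(\rn)$, whence $u_1=u_2$ a.e.\ by the fundamental lemma of the calculus of variations. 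The only points requiring care are the dependence of the constant on the frozen ball $B_R$ (harmless, since $R$ is fixed throughout the construction on that ball) and the density of $C^{\infty}_{0}(B_R)$ in $L^{r'}(B_R)$, which is standard.
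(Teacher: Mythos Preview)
Your proposal is correct and follows essentially the same Stampacchia duality scheme as the paper: define the linear functional $T(g)=\int_{\rn} w\,d\mu$ on $C^{\infty}_{0}(B_R)$, use Lemma~\ref{w}~i) to bound it in the $L^{\sigma}(B_R)$ norm for $\sigma>\frac{N}{2s}$, extend by density and invoke Riesz representation to produce $u\in L^{\sigma'}(B_R)$, then glue over increasing balls via the fundamental lemma of the calculus of variations. The only cosmetic differences are that the paper chooses $B_R$ to contain $\mathrm{supp}(\mu)$ from the outset and writes $\sigma,\sigma'$ where you write $r',r$, but the argument is identical.
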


\begin{remark}
Notice that if $s\to 1^-,$ we recover the classical optimal
summability of elliptic equations with measure data,  since in this
case we have  $u\in L^{q}_{loc}(\rn)$, for any $q< \frac{N}{N-2}$.
Moreover,   the fundamental solution for the fractional Laplacian
$\frac{c}{|x|^{N-2s}}$ (see for instance \cite{cs}) belongs to the
same space   around the origin. Therefore, since this is a solution
with $\mu=\delta_0$ we have that our result is optimal.

Also observe that, in view of Theorem \ref{te} (and also Lemma \ref{w}),
Definition \ref{duale} makes sense (by density) for
test functions $g$ not only in $C^\infty_0 (\rn)$ but also 
in $L^{\sigma}_{loc}(\rn)$ with $\sigma>\frac{N}{2s}$.
\end{remark}

\begin{proof}[Proof of Theorem \ref{te}]
Let $R>0$ and consider a ball $B_{R}$ of radius $R$ such
that  supp$(\mu)\subset B_{R}$. For any $g\in C^{\infty}_{0}(B_{R})$, let us
define the following operator $T:C^{\infty}_{0}(B_{R})\mapsto \re$ through
$$
T(g):=\int_{\rn} w(x)\ d\mu.
$$
Thanks to Lemma \ref{w}, $T$ is well defined, and we can write
$$
|T(g)| \leq \|w(x)\|_{L^{\infty}(\rn)}\ |\mu|(\rn)\leq C\|g\|_{L^{\sigma}(B_{R})},
$$
where $C$ depends only on $\mu, N, R, s$ and $\sigma$. 
Then for a fixed $\sigma$, $T$ extends to a bounded continuous
linear functional on  $L^{\sigma}(B_{R})$, so that by 
Riesz Representation Theorem, there 
exists a unique function \bk $u\in L^{\sigma'}(B_{R}) $ such that
\begin{equation}\label{sig}
	\int_{\rn} w\ d\mu=\int_{\rn}ug.
\end{equation}
Notice that both of the integrals in (\ref{sig}) are 
actually computed on $B_{R}$. By repeating the same argument for any
$\sigma>\frac{N}{2s}$ we find a unique $u\in L^{\sigma'}(B_{R})$ for
all $\sigma'<\frac{N}{N-2s}$ such that \rife{sig} holds.

Now, if $R'>R$, we can follow the same argument to find a
function $\hat{u}  \in L^{\sigma'}(B_{R'})$ for all
$\sigma'<\frac{N}{N-2s}$. An easy application of the fundamental
theorem in the calculus of variations shows that
actually $u=\hat{u}$ a.e. on $B_{R}$ and this concludes the proof.
\end{proof}

\section{Additional regularity and Proof of Theorem \ref{maint}}\label{sec3}

We shall prove some further regularity results for
the duality solution of problem \rife{du}. 
Theorem \ref{maint} will follow immediately 
by combining Theorem \ref{te} with the 
regularity result given in Theorem \ref{fur} below. 
In the proof of Theorem \ref{fur} we employ Young's 
inequality for convolutions:  for the sake of the 
completeness we recall it here in the form we need it. 
Here 
$$ 
(f\ast g)(x) := \int_{\rn} f(x-y) g(y) \, dy.
$$
\begin{lemma}\label{yng}(\cite{LL, MS})
If $f \in L^p(\rn)$, $g \in L^q(\rn)$, $\frac{1}{p}+ \frac{1}{q} 
= 1 + \frac{1}{r}$, with $1 \leq p, q, r \leq \infty.$ Then
$$ 
\|f\ast g\|_{L^r(\rn)} 
\leq \|f\|_{L^p(\rn)} ||g||_{L^q(\rn)}.
$$
\end{lemma}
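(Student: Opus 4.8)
The plan is to give a direct, self-contained proof resting on the generalized H\"older inequality together with Fubini--Tonelli, treating first the generic case $1<p,q,r<\infty$ and then disposing of the degenerate exponents separately. The whole point is that the exponent relation $\frac1p+\frac1q=1+\frac1r$ is precisely what makes a suitable three-fold H\"older splitting close up; this is where the hypothesis is genuinely used.

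First I would fix $x$ and factor the integrand of the convolution into three nonnegative pieces,
$$
|f(x-y)|\,|g(y)|=\bigl(|f(x-y)|^{p}|g(y)|^{q}\bigr)^{1/r}\,|f(x-y)|^{1-p/r}\,|g(y)|^{1-q/r},
$$
and then apply H\"older's inequality in the $y$ variable with the three exponents $r$, $a:=\frac{pr}{r-p}$ and $b:=\frac{qr}{r-q}$. Since $\frac1a=\frac1p-\frac1r$ and $\frac1b=\frac1q-\frac1r$, the Young relation gives $\frac1r+\frac1a+\frac1b=\frac1p+\frac1q-\frac1r=1$, so these are admissible conjugate exponents (and one checks $r>p$, $r>q$ strictly in the generic case, so $a,b$ are positive and finite). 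This produces the pointwise bound
$$
(|f|\ast|g|)(x)\le\Bigl(\int_{\rn}|f(x-y)|^{p}|g(y)|^{q}\,dy\Bigr)^{1/r}\,\|f\|_{L^{p}(\rn)}^{1-p/r}\,\|g\|_{L^{q}(\rn)}^{1-q/r}.
$$

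Next I would raise this to the power $r$, integrate in $x$, and apply Fubini--Tonelli to the surviving double integral, whose integrand is nonnegative, using
$$
\int_{\rn}\!\int_{\rn}|f(x-y)|^{p}|g(y)|^{q}\,dy\,dx=\|f\|_{L^{p}(\rn)}^{p}\,\|g\|_{L^{q}(\rn)}^{q}.
$$
Combining the exponents $r-p$ and $r-q$ from the pointwise estimate with the $p$ and $q$ coming from Fubini yields exactly $\|f\|_{L^{p}(\rn)}^{r}\|g\|_{L^{q}(\rn)}^{r}$, and taking $r$-th roots gives the claim. The degenerate cases I would handle by hand: when $r=\infty$ the relation forces $q=p'$ and the bound is immediate from ordinary H\"older in $y$; when $p=1$ (resp.\ $q=1$) the relation forces $r=q$ (resp.\ $r=p$) and the estimate follows from Minkowski's integral inequality, $\|f\ast g\|_{L^{r}(\rn)}\le\int_{\rn}|f(y)|\,\|g(\cdot-y)\|_{L^{r}(\rn)}\,dy=\|f\|_{L^{1}(\rn)}\|g\|_{L^{r}(\rn)}$.

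I expect the only real obstacle to be the bookkeeping in the main case: guessing the correct three-factor decomposition and verifying that the three H\"older exponents are conjugate is delicate, and it is easy to misplace a factor of $p/r$ or $q/r$. An alternative I would keep in reserve is to fix $g$ and interpolate the linear operator $f\mapsto f\ast g$ by Riesz--Thorin between the two endpoint mappings $L^{1}(\rn)\to L^{q}(\rn)$ (Minkowski) and $L^{q'}(\rn)\to L^{\infty}(\rn)$ (H\"older); this bypasses the explicit splitting but costs an appeal to the interpolation theorem, so for a self-contained note the direct argument is preferable.
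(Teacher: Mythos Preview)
Your argument is correct and is the standard three-exponent H\"older proof of Young's inequality; the factorization, the verification that $\frac1r+\frac1a+\frac1b=1$ via the hypothesis, the Fubini step, and the handling of the endpoint cases are all in order.

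The paper, however, does not prove this lemma at all: it is quoted as a known result with references to \cite{LL, MS} and then used as a tool in the proof of Theorem~\ref{fur}. So there is no ``paper's own proof'' to compare against. Your write-up supplies a self-contained argument where the paper is content to cite; if the goal is a short note, the citation is enough, while your version would make the exposition independent of the references at the cost of a paragraph. The Riesz--Thorin alternative you mention is also fine but, as you say, less elementary.
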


Observe that the previous result can be applied, as a 
particular case, when $p=1$ provided $r=q$. 
Let us state the main result of this section.

\begin{theorem}\label{fur}
Let $\mu$ be a bounded, compactly supported Radon measure on $\rn$. 
Then the duality solution of problem \rife{du}
found in Theorem \ref {te} belongs to $u\in W^{1-\frac{2-2s}{q},q}_{loc}(\rn)$
for any $q<\frac{N+2-2s}{N+1-2s}$.
\end{theorem}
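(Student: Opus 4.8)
The plan is to identify the duality solution with the Riesz potential of $\mu$ and then to estimate its Gagliardo seminorm on balls through a difference--quotient computation, transferring the estimate onto the convolution kernel $K(z)=C_{N,s}|z|^{-(N-2s)}$ by means of Young's inequality (Lemma \ref{yng} in the case $p=1$, $r=q$). First I would record that the solution of Theorem \ref{te} is given explicitly by $u=K\ast\mu$: inserting $w=K\ast g$ into \rife{dualf} and applying Fubini (legitimate by Lemma \ref{w}, since $w$ is bounded and $\mu$ is finite) yields $\int u g=\int (K\ast\mu)g$ for every $g\in C_0^\infty(\rn)$, whence $u(x)=C_{N,s}\int_{\rn}|x-y|^{-(N-2s)}\,d\mu(y)$ a.e.

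Next, I would fix a ball $B_{R'}$ and use the difference--quotient form of the seminorm obtained by the substitution $x'=x+h$: writing $\tau_h\phi(z):=\phi(z+h)$ and $\eta=1-\tfrac{2-2s}{q}$,
\[
[u]_{W^{\eta,q}(B_{R'})}^q\ \le\ \int_{|h|<2R'}\frac{1}{|h|^{N+\eta q}}\Big(\int_{B_{R'}}|u(x+h)-u(x)|^q\,dx\Big)\,dh .
\]
Since $\supp\mu\subset B_R$, for $x,x+h\in B_{R'}$ only the values of $K$ on a fixed bounded ball enter the convolution, so I may replace $K$ by a smoothly truncated kernel $\widetilde K=\zeta K$, with $\zeta\equiv 1$ on $B_{R+R'}$ and compactly supported, without altering $u(x+h)-u(x)$. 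Then $u(x+h)-u(x)=\big((\tau_h\widetilde K-\widetilde K)\ast\mu\big)(x)$, and Young's inequality in the form $\|f\ast\mu\|_{L^q(\rn)}\le\|f\|_{L^q(\rn)}\,|\mu|(\rn)$ (which I would justify by regularizing $\mu$ and passing to the limit, using lower semicontinuity of the seminorm) gives
\[
\|u(\cdot+h)-u\|_{L^q(B_{R'})}\ \le\ \|\tau_h\widetilde K-\widetilde K\|_{L^q(\rn)}\,|\mu|(\rn).
\]

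The heart of the argument is the kernel estimate $\|\tau_h\widetilde K-\widetilde K\|_{L^q(\rn)}\lesssim |h|^{\theta}$ with $\theta=\min\{\tfrac{N}{q}-(N-2s),\,1\}$. I would split the integral into the region $\{|z|\le 2|h|\}$, where the two singularities of $K$ are treated separately and contribute an amount of order $|h|^{N/q-(N-2s)}$ (here $(N-2s)q<N$ throughout the stated range, since $\tfrac{N+2-2s}{N+1-2s}<\tfrac{N}{N-2s}$), and its complement, where the mean value theorem together with $|\nabla\widetilde K(z)|\lesssim|z|^{-(N+1-2s)}$ and the boundedness of $\nabla\zeta$ yields a contribution of order $|h|$. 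Feeding this back,
\[
[u]_{W^{\eta,q}(B_{R'})}^q\ \lesssim\ \int_{|h|<2R'}|h|^{\theta q-N-\eta q}\,dh,
\]
which converges precisely when $\theta>\eta$. A direct computation gives $\theta-\eta=\big((N+2-2s)-(N+1-2s)q\big)/q$, so the condition $\theta>\eta$ is exactly $q<\tfrac{N+2-2s}{N+1-2s}$; as $B_{R'}$ was arbitrary, this proves $u\in W^{1-\frac{2-2s}{q},q}_{loc}(\rn)$ for all such $q$.

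The main obstacle I anticipate is this kernel difference estimate, and more subtly the localization it rests on. Because $K\notin L^q(\rn)$ and $\tau_hK-K$ is not globally $L^q$ in part of the range, Young's inequality cannot be applied on all of $\rn$; moreover a naive sharp truncation of $K$ produces a boundary layer near $\partial B_{R+R'}$ contributing a term of the wrong order $|h|^{1/q}$. Replacing the sharp cutoff by a smooth one (equivalently, exploiting the smoothness of $u$ away from $\supp\mu$) is exactly what lowers that term to order $|h|$ and makes the exponent count close at the sharp threshold $\tfrac{N+2-2s}{N+1-2s}$.
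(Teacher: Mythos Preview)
Your argument is correct and reaches the same sharp threshold $q<\frac{N+2-2s}{N+1-2s}$, but the route differs from the paper's. The paper approximates $\mu$ by smooth $f_n$, sets $u_n=K\ast f_n$, and works on the Fourier side: it uses the (heuristic) identity $|D|^{\eta}(K\ast f_n)\sim f_n\ast |x|^{-(N-2s+\eta)}$ (composition of Riesz potentials), localizes with a smooth cut-off $\psi$, and applies Young's inequality with $p=1$, $r=q$ to the kernel $\psi|x|^{-(N-2s+\eta)}$; the single integrability check is $(N-2s+\eta)q<N$, which is exactly your condition $\theta>\eta$. It then extracts a weak limit in $W^{\eta,q}(B_R)$ via compactness and identifies it with the duality solution $u$. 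You, instead, first identify $u=K\ast\mu$ by Fubini and then estimate the Gagliardo seminorm directly in physical space through the difference-quotient bound $\|\tau_h\widetilde K-\widetilde K\|_{L^q}\lesssim|h|^{\theta}$, avoiding both the Fourier computation and the approximation/compactness step. Your approach is more elementary and self-contained; the paper's is shorter once one accepts the Riesz-composition identity, and the approximation by $f_n$ sidesteps the need to justify Young's inequality for measure convolutions. Note also that your claim that the complement region contributes ``order $|h|$'' is only literally true when $(N+1-2s)q<N$; in the other regime it contributes $|h|^{N/q-(N-2s)}$ as well, but this does not affect your stated bound $\theta=\min\{N/q-(N-2s),1\}$, which is correct in both cases.
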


\begin{proof}
Let us fix $R>0$, such that supp$(\mu)\subset B_R$ and
let us approximate $\mu$ by smooth functions
$f_n\in C_{0}^{\infty}(B_R)$  in  the narrow topology of measures. 
This can be easily obtained by standard convolution arguments. 
Moreover we can choose $f_n$ such that
$\|f_n\|_{L^1 (B_R)}\leq C|\mu|(\rn)$.

Now,  consider
$$
u_n (x) =\int_{B_R} \frac{f_n(y)}{|x-y|^{N-2s}}\ dy\,
$$
where we recall that $u_n$ is a (duality) solution of
problem \rife{du} with $f_n$ as datum (see Remark \ref{coin2}).

Now, we claim that the following inequality is valid.
\begin{equation}\label{bound}
	\|u_n\|_{W_{}^{\eta, q}(B_R)}\leq C,
\end{equation}
where
\begin{equation}\label{eta-q}
	\eta=1-\frac{2-2s}{q}, \ \ \ \ q
	=\frac{\sigma'(N+2 -2s)}{N+\sigma'},
\end{equation}
and $\sigma$ is a real number such
that $\sigma > \frac{N}{2s}$ with
$\frac{1}{\sigma}+\frac{1}{\sigma'} = 1$.

Indeed, let $x \in B_R$ be fixed and $\psi$ be a cut-off function
that is compactly supported in $\rn$ and such that $\psi \equiv 1$
on $B_{2R}.$ Note that, $\psi(x) = \psi (x-y)$ for any $y \in B_R.$

Thanks to this choice, we have, using the classical 
properties of Fourier  transform,  that
\begin{equation}\label{dunglike}
	\left||D|^{\eta} u_n(x) \psi^{}(x)\right| \thicksim
	\left|f_n(x)\ast \frac{\psi^{}(x)}{|x|^{N-2s+\eta}}\right|.
\end{equation}
So that, to obtain an upper bound on the $L_{\text{loc}}^q(\rn)$ norm of 
$|D|^{\eta}u_n,$  we multiply it by the cut-off function $\psi$ as indicated
above. The cut-off argument enables us to use the 
standard Young's inequality for convolutions, see Lemma \ref{yng},
with  $r= \frac{\sigma' (N+2-2s)}{N+\sigma'}, p=1$ and 
thus $q = r = \frac{\sigma' (N+2-2s)}{N+\sigma'}$. 
Notice that,  in order to apply the Lemma \ref{yng}, we 
only need to know that $\frac{\psi}{|x|^{N-2s+\eta}} \in L^{q}(\rn)$, which 
is true since $(N-2s+\eta)q <N$.  Therefore, recalling that $f_n$ is 
bounded in $L^1(B_R)$, we finally obtain
$$
\||D|^{\eta} u_n(x)\|_{L^q(B_R)}\leq C\,.
$$

The bound  on $u_n$ in $L^q (B_R)$ follows in a similar way.  Indeed,
we multiply $u_n$ by the same 
the cut-off function $\psi$ as before,  and we observe 
that $\frac{\psi}{|x|^{N-2s}} \in L^q (\rn)$ where $q$ 
is as in \rife{eta-q} (observe that, from the previous 
calculations, $(N-2s)q<(N-2s +\eta)q<N$). 
Hence, recalling that $f_n$ is uniformly bounded in 
$L^1 (B_R)$ and using Young's inequality 
for convolutions (now with $p=1$ and $r= q$), we get
$$
\|u_n\|_{L^q (B_R)}\leq C\|f_n\|_{L^1(B_R)}.
$$
This concludes the proof of the claim \eqref{bound}.

Since the constant on the right hand side of \rife{bound} is independent
of $n,$ we can find a function $v$ such that $u_n$
converges along a subsequence to $v$
weakly in  $ W^{1-(2-2s)/q,q}(B_R)$ for
any $q< \frac{N+2-2s}{N+1-2s}$. Moreover, thanks
Theorem  \ref{emb} we deduce that a subsequence of $u_n$
converges to $v$ strongly in $L^\gamma_{}(B_R)$ for any 
$1\leq \gamma< 1+\frac{2}{N-2s}$ and almost everywhere. 
By repeating the argument for any $R>0$ we are allowed to 
pass  to the limit  in the duality formulation
for $u_n$ to prove that the limit $v$ solves the problem. 
Finally, by subtracting the formulation of  $v$ from the 
one of $u$ proved in Theorem \ref{te} we 
easily deduce that $u=v$.
\end{proof}

\begin{remark} 
Let us emphasize that $q>1$ is approaching $\frac{N+2-2s}{N+1-2s}$
from below as $\sigma$ approaches $\frac{N}{2s}$ from above.
Notice also  that if $s\to 1^-$ we recover the classical optimal
summability for elliptic boundary value problems with 
measure data (see again \cite{s}), since in this
case we have  $u\in W^{1,q}_{loc}(\rn)$, for any $q< \frac{N}{N-1}$.
Finally, as before, the result is optimal since a 
direct computation shows  that the fundamental
solution for the fractional Laplacian,  $\frac{c}{|x|^{N-2s}}$
belongs to the space $W^{1-\frac{2-2s}{q},q}$, for
any $q< \frac{N+2-2s}{N+1-2s}$  around the origin.
\end{remark}

\medskip

In this note, as a first step to study nonlocal
elliptic equations with measure data, we considered the fractional
Laplace equation \rife{main}. Employing the duality method, as the
operator is linear, first introduced by Stampacchia \cite{s}, we
introduce a solution concept and we prove existence, uniqueness and
regularity of these solutions. The approach utilized in this paper can be 
used for other types of nonlocal operators. Indeed, the study of similar problems 
involving more general nonlocal operators will be the subject 
of future work of the same authors.

\end{document}